\newtheorem{theorem}{Theorem}[section]
\theoremstyle{definition}
\theoremstyle{remark}
\title{The Heat Equation on the Finite Poincar\'e Upper Half-Plane}
\author{M. R. DeDeo and Elinor Velasquez}
\address{Department of Mathematics and Statistics\\
        University of North Florida\\
         Jacksonville, FL 32224,  USA
            }
\email{mdedeo@unf.edu}
\address{Department of Mathematics\\
        University of California, Berkeley\\
         Berkeley, CA 94720,  USA
            }
\email{evelas21@mail.ccsf.edu}
\keywords{Finite upper half-plane, combinatorial Laplacian, zonal spherical function, theta function, heat equation, Cayley graph}
\subjclass[2010]{35K05,11T60,43A90 (05C90) }
\begin{document}
\begin{abstract}
A differential-difference operator is used to model the heat equation on a finite graph analogue of Poincar\'e's upper half-plane.
Finite analogues of the classical theta functions are shown to be solutions to the heat equation in this setting.
\end{abstract}
\maketitle
\section{Introduction}
Consider a finite graph $\varGamma$ with an arbitrary vertex set $V$ and some distinguished vertex $v_0 \in V$. We seek to determine the heat flow from vertex to vertex over continuous time. With regard to a specific initial condition, a solution to the above problem is modeled by the fundamental solution to the following set of equations:
\begin{align}\label{eq-1}
&\left[
\frac{\partial}{\partial t} - \Delta
\right]u(v,t) = 0,\quad t>0,\nonumber\\
&\lim_{t\to 0+}
\sum_{v \in V} u(v,t) f(v) = f(v_0),
\end{align}
for a fixed function $f$ on $V$, and $\Delta$ the combinatorial
Laplacian defined on the graph $\varGamma$. In this paper, we compute an explicit solution for the heat equation, with initial data concentrated at a point on $\varGamma$, a graph which has been constructed to act as a finite analogue of the Poincar\'e upper half-plane.
Specifically, $\varGamma$ is a finite Cayley graph associated with the group of rank 2 invertible matrices over a finite field of characteristic $q$, with edges created via the generating set of this group.

Solving for the fundamental solution to the heat equation on a bounded domain is a classical problem in partial differential equations. When the domain is the circle, for instance, the fundamental solution of the heat equation can be described by a theta function.

Solving the heat equation on $H$, the Poincar\'e upper half-plane amounts to computing the temperature function over time on an insulated, thin hyperbolic triangle \cite{Ter1}. This hyperbolic triangle is non-compact, yet analogues to classical theta functions occur when describing the fundamental solution.
In this paper we determine domains in which theta functions occur as fundamental solutions to the heat equation.

The fundamental solution to the heat equation has applications to random walks on groups in probability theory. See Grigorchuk \cite{Gri}, Levit-Molchanov \cite{LM}, and Pagliacci \cite{Pag} for some recent results. In the work of Pagliacci, the fundamental solution takes the form of a time-dependent inverse problem over discrete variables where $\varGamma$ is a finite graph and a random walk takes place on $\varGamma$.

Begin with the initial position at $v_0 \in$ $V$ which can be viewed as the initial probability distribution $P_0(v)$ for some $v\in V$. The probability distribution after $n$ time-steps, $P_n(v)$, is completely determined by the initial state $P_0$ and the transition-matrix $A$, the adjacency matrix of the graph. Since the adjacency matrix is related to the combinatorial Laplacian, we obtain a set of difference only equations representing the linear heat equation on $\varGamma$. In this case, $\varGamma$ is a homogeneous tree of order $q + 1$.

Suppose $f: \varGamma \times \mathbb{N} \rightarrow \mathbb{C}$, such that $f(x,k) = f_k(x)$. Then the heat equation on $\varGamma$ described by Pagliacci is given by
\begin{equation}\label{pagl}
f_k(x) = \sum_{d(x,y)=1} p(x,y) f_{k-1}(y),
\end{equation}
with initial condition $f_0 (x)$, $d$ the canonical distance function on $\varGamma$, and $p$ the transition probability on $V$.

In other words, Pagliacci defines the heat equation on $\varGamma$ using a weighted adjacency matrix for the Laplacian operator and a finite difference operator over \(\mathbb{Z}\) for the time differential. There is no initial condition such as in (\ref{eq-1}).

Not all investigations of the heat equation on a finite graph use a combinatorial Laplacian. In the work of Gavean, Okada, and Okada \cite{GOO}, graphs of both finite and infinite vertex sets are considered, but the viewpoint is quite different. The functions are defined on the vertex set of $\varGamma$ as well as along the continuous segments of the graph, namely the edges. Thus the graphs considered in \cite{GOO} are equipped with a Riemannian structure and the Laplacian is induced from this structure. In other words, in \cite{GOO} the operator
$\frac{\partial}{\partial t} - {\alpha}_j \frac{{\partial}^2}{\partial x^2},$
for $j \in \mathbb{N} $, is applied to functions on the graph $\varGamma$. For the results in this paper, we do not assume the functions to be evaluated on the edges of the graphs as in \cite{GOO}.

This paper discusses the solutions connected to differential-difference operators of the form $\frac{\partial}{\partial t} - \Delta$ with $\Delta$ the combinatorial Laplacian. We believe this to be the first study of the heat equation via a differential-difference operator on a graph equipped with a graph-theoretic Laplacian operator. In addition, the domain is rich with structure, making the results nontrivial.
Our technique for solution is an extension of the method of separation of variables on a non-abelian Cayley graph which involves special functions.

Classically, the heat equation on bounded domains, specifically on tori, yield theta functions as solutions. We study a finite analogue of the Poincar\'e upper half-plane, namely the finite upper half-plane introduced by Terras \cite{Ter2}. Using this case, we investigate the periodicity inherent in the solutions of bounded domains.

The solutions involve zonal spherical functions which come with a natural periodicity. In addition, the related theta functions are automorphic forms and the resultant periodicity interweaves representation theory with the heat equation. We hope this paper stimulates more study of this interplay.

\section{Finite upper half-planes}
Fix $q \ge 3$ odd and $F_q$, a finite field of $q$ elements. Suppose
$\delta \in F_q$ is not a square of any element of $F_q$. Then the finite upper half-plane is said to be the set $$ H_q = \{ x + y\sqrt{\delta} \mid x \in F_q, y \in F_q^{\times} \}, $$ with $F_q^{\times}$ the multiplicative group of $F_q$ \cite{Ter2}.
Recall that the affine group is the subgroup

$$Aff(q) = \left\{ \begin{pmatrix} y&x\\ 0&1 \end{pmatrix}\right|(y,x) \in F_q^{\times} \times F_q\biggr\}$$

of $GL(2,F_q)$. Form a Cayley graph, denoted
${\varGamma}_q (H_q, H_q \times S)$, by taking $H_q$ as the vertex set and $H_q \times S$ as the edge set with $S$ the generating set of the affine group.
Then, for $r \in F_q$,

$$
S_r = \left\{ \begin{pmatrix} y&x\\ 0&1 \end{pmatrix} \right| x^2 = ry + \delta (y - 1)^2, y \in F_q^{\times}, x \in F_q  \biggr\}. $$

Thus the constructed Cayley graph depends on which $S_r$, $r \in F_q$, is used to generate the edge set \cite{CPTTV}. If $e$ is an edge in the graph, then $e$ has $e_-$, $e_+$ as an initial and a terminal vertex, respectively. So the edge set $H_q \times S_r$ means that $e_- = z \in H_q$ and $e_+ = zs, s \in S_r$. With generating set $S_r$, the graph is connected.

Equip ${\varGamma}_q (H_q, H_q \times S)$ with the combinatorial Laplacian. Suppose $A$ is the adjacency matrix with entries $A_{z,w}$
= number of paths from vertex $z$ to vertex $w$, $z,w \in H_q$, and $I$ is the identity matrix of size $q(q-1)$. If $f \in C(H_q)$, the set of functions on $H_q$, the operator $\Delta :C(H_q) \rightarrow C(H_q)$, defined by $$ \Delta f(x) = ((q+1)I -A)f(x), $$ is said to be the combinatorial Laplacian operator associated to ${\varGamma}_q (H_q, H_q \times S)$.

We require that the fundamental solution on $H_q$, $E(t;x,y_0)$,  satisfy the following set of equations:

\begin{equation}\label{eq-2}
\Delta E(t;x,y_0) -
\frac{\partial}{\partial t} E(t;x,y_0) = 0,
\quad t>0,
\end{equation}
\begin{equation}\label{eq-3}
\lim_{t\to 0+}\frac1{q(q-1)} \sum_{x \in H_q} E(t;x,y_0) f(x)= f(y_0),
\end{equation}
following the notation of Benabdallah \cite{Ben} and Fischer, Jungster and Williams \cite{FJW}. The solution $E(t;x,y_0)$ is a function on $\mathbb{R} \times H_q \times \{ y_0 \}$. In other words, time $t$ and space $x$ are taken to be continuous and discrete variables, respectively, with the initial data concentrated at a point $y_0 \in H_q$.
\section{Method of images}
When $H$ is the Poincar\'e upper half-plane, an explicit solution to the heat equation with the initial condition $E(0,x) = f(x)$ is computed using the method of images \cite{Ter1}. Benabdallah \cite{Ben} uses the method of images to solve the heat equation, with an initial condition analogous to (\ref{eq-3}) on compact, homogeneous spaces. We adapt Benabdallah's technique to solve (\ref{eq-2}) -- (\ref{eq-3}).

Let ${\varGamma}_q (GL(2,F_q),GL(2,F_q) \times S_{GL(2,F_q)})$ denote the Cayley graph formed by taking $GL(2,F_q)$ as the vertex set and $GL(2,F_q) \times S_{GL(2,F_q)}$ as the edge set with $S_{GL(2,F_q)}$ the generating set of $GL(2,F_q)$.
Equip ${\varGamma}_q (GL(2,F_q),GL(2,F_q) \times S_{GL(2,F_q)})$ with the combinatorial Laplacian, ${\Delta}_{GL(2,F_q)}$.

Suppose $\delta$ is a non-square in $F_q$. Let

$$ K =\left\{ \begin{pmatrix} a&\delta b\\ b&a \end{pmatrix} \right| a^2 - \delta b^2 = 1  \biggr\}. $$

Then $H_q = GL(2,F_q)/K$ is a homogeneous space with $GL(2,F_q)$ acting transitively on the points of $H_q$. If $\pi : GL(2,F_q) \rightarrow GL(2,F_q)/K$ and $f$ is a function on $H_q$, then $\Delta f \circ \pi = {\Delta}_{GL(2,F_q)} (f \circ \pi )$ with $\Delta$ and ${\Delta}_{GL(2,F_q)}$ the combinatorial Laplacians associated to ${\varGamma}_q (H_q, H_q \times S)$ and ${\varGamma}_q (GL(2,F_q),GL(2,F_q) \times S_{GL(2,F_q)})$, respectively.

\begin{theorem}\label{Thm 4.1}
Suppose that $E_{GL(2,F_q)}(t;z) = E_{GL(2,F_q)}(t;z,e)$ denotes the fundamental solution for the heat equation with initial concentration at the identity $e$ on $GL(2,F_q)$ and ${\Delta}_{GL(2,F_q)}$. Then $$ E(t;zK) = \frac1{q^2 - 1} \sum_{k \in K} E_{GL(2,F_q)} (t;zk), $$ is the fundamental solution to (\ref{eq-2})--(\ref{eq-3}) with initial concentration at the identity coset $K$. In other words, $E(t;zK) = E(t;zK,K)$.
\end{theorem}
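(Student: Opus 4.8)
The plan is to verify directly that the averaged function satisfies the two defining properties (\ref{eq-2})--(\ref{eq-3}), exploiting the intertwining relation $(\Delta f)\circ\pi = \Delta_{GL(2,F_q)}(f\circ\pi)$ recorded just above the theorem. Throughout write $G = GL(2,F_q)$, $\phi_t(g) = E_{GL(2,F_q)}(t;g)$, and let $P = \tfrac1{|K|}\sum_{k\in K} R_k$, where $R_k$ denotes right translation $R_k\psi(g) = \psi(gk)$. Before anything else I would record that $E(t;zK)$ is well defined on $G/K = H_q$: replacing $z$ by $zk'$ merely reindexes $\sum_{k\in K}E_{GL(2,F_q)}(t;zk'k)$, so the value depends only on the coset. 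Equivalently, the pullback $\Phi_t := E(t;\cdot)\circ\pi$ equals $P\phi_t$ and is right-$K$-invariant, hence of the form $f_t\circ\pi$ with $f_t = E(t;\cdot)$.

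For the heat equation (\ref{eq-2}) the key observation is that $P$ is the orthogonal projection of $C(G)$ onto the right-$K$-invariant functions: it is idempotent and self-adjoint because the $R_k$ are permutation (hence unitary) operators and $k\mapsto R_k$ is a representation of $K$. The intertwining relation says precisely that $\Delta_G$ carries right-$K$-invariant functions to right-$K$-invariant functions, i.e. $\Delta_G$ preserves $\mathrm{Im}(P)$. Since the combinatorial Laplacian of the (undirected) Cayley graph is self-adjoint, it also preserves $\mathrm{Im}(P)^{\perp}$, and therefore commutes with $P$. Using the heat equation satisfied by $\phi_t$ on $G$ together with $[\Delta_G,P]=0$, I then get $\partial_t\Phi_t = P\,\partial_t\phi_t = P\Delta_G\phi_t = \Delta_G P\phi_t = \Delta_G\Phi_t$. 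Finally, because $\Phi_t = f_t\circ\pi$ is right-$K$-invariant, the intertwining relation rewrites $\Delta_G\Phi_t = (\Delta f_t)\circ\pi$ while $\partial_t\Phi_t = (\partial_t f_t)\circ\pi$; surjectivity of $\pi$ then yields $\Delta E(t;\cdot) = \partial_t E(t;\cdot)$ on $H_q$.

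For the initial condition (\ref{eq-3}) I would feed a test function on $H_q$ into the corresponding property on $G$. Given $f\in C(H_q)$, set $F = f\circ\pi$, which is right-$K$-invariant. Grouping $\sum_{g\in G}$ by cosets $zK$ and using $F(zk)=f(zK)$ gives
\begin{equation*}
\frac{1}{|G|}\sum_{g\in G}E_{GL(2,F_q)}(t;g)F(g) = \frac{1}{|G|}\sum_{zK}f(zK)\sum_{k\in K}E_{GL(2,F_q)}(t;zk) = \frac{|K|}{|G|}\sum_{x\in H_q}E(t;x)f(x).
\end{equation*}
Since $\tfrac1{q^2-1}=\tfrac1{|K|}$ is the averaging factor and $|G| = |K|\,|H_q|$ with $|H_q| = q(q-1)$, the prefactor $|K|/|G|$ collapses to $1/(q(q-1))$, matching the normalization in (\ref{eq-3}). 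Letting $t\to 0+$ and invoking the initial condition for $E_{GL(2,F_q)}$ on $G$, the left side tends to $F(e) = f(\pi(e)) = f(K)$, which is exactly (\ref{eq-3}) with $y_0 = K$.

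The main obstacle is the commutation $[\Delta_G,P]=0$ underlying the heat-equation step: individually the translates $R_k\phi_t$ are \emph{not} right-$K$-invariant, so one cannot apply the intertwining relation to them termwise. The clean resolution is the self-adjointness argument above---invariance of $\mathrm{Im}(P)$ under the symmetric operator $\Delta_G$ forces invariance of the orthogonal complement and hence commutation with the projection. I would therefore take care to note that the generating set of $G$ is symmetric, so that $\Delta_G$ is genuinely self-adjoint; the remaining manipulations are routine bookkeeping with coset sums and normalizations.
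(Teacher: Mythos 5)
Your proof is correct, but the heat-equation step takes a genuinely different route from the paper's. The obstacle you identified --- that the individual right translates $R_k\phi_t$ are not right-$K$-invariant, so the intertwining relation cannot be applied to them termwise --- is resolved by the paper in another way: it first shows that $\Delta_{GL(2,F_q)}$ commutes with \emph{left} translations, deduces that the heat kernel is invariant under inner automorphisms (a class function), and then rewrites each right translate as a left translate, $E_{GL(2,F_q)}(t;zk)=E_{GL(2,F_q)}(t;z,k^{-1})$, i.e.\ as the fundamental solution with source at $k^{-1}$. Each such term satisfies the heat equation on $GL(2,F_q)$ by itself (left translates of solutions are solutions), so the paper applies the intertwining relation once to the full $K$-average and finishes by linearity --- a genuine ``method of images'': a superposition of kernels centered at the image points $k^{-1}$, $k \in K$. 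Your argument instead proves $[\Delta_G,P]=0$ from self-adjointness of $\Delta_G$ together with invariance of $\mathrm{Im}(P)$, which avoids the class-function property entirely; it does require the symmetry $S=S^{-1}$ of the generating set (true here, as these graphs are undirected), and in fact you could get the commutation even more cheaply: the generating set is a union of $K$-double cosets, so $k^{-1}Sk=S$ for every $k\in K$, hence $\Delta_G$ commutes with each $R_k$ individually, with no appeal to self-adjointness. The trade-off is that the paper's class-function observation is not wasted effort --- it is reused immediately after the theorem, where the Peter--Weyl theorem expands $E$ in characters and then in zonal spherical functions --- whereas your argument is more robust and more general, working for any subgroup $K$ and any bi-$K$-invariant symmetric generating set without knowing anything about conjugation invariance of the kernel. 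Your treatment of the initial condition (grouping the sum over $GL(2,F_q)$ into cosets and matching normalizations) coincides with the paper's.
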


\begin{proof}
The combinatorial Laplacian ${\Delta}_{GL(2,F_q)}$ is a left-invariant operator, analogous to the geometrical Laplacian. Let $f$ be a function on $GL(2,F_q)$. Left translation on $f$ is defined as $f^{g_1}
(x) = f(g_1,x)$ with $g_1, x \in GL(2,F_q)$.

Then $$
\aligned
{\Delta}_{GL(2,F_q)} f^{g_1} (x) &= ((q+1)I -
A)f^{g_1} (x) \\
&= (q+1)f^{g_1} (x) - \sum_{s\in Kg_0K = S}
f^{g_1} (xs) \\
&= (q+1)f(g_1x) - \sum_{s\in S} f(g_1xs)
\\
&= ({\Delta}_{GL(2,F_q)} f)^{g_1} (x).
\endaligned
$$

We see that if $E_{GL(2,F_q)}(t;x,y_0)$ is the fundamental solution for the heat equation on $GL(2,F_q)$ and ${\Delta}_{GL(2,F_q)}$, then $E_{GL(2,F_q)}(t;x,y_0)$ is invariant under left translations. This means that $E_{GL(2,F_q)}(t;gx,e) = E_{GL(2,F_q)}(t;x,g^{-1})$ for $e$ the identity in $GL(2,F_q)$. Thus $E_{GL(2,F_q)}(t;x,y_0)$ is invariant under the inner automorphisms of $GL(2,F_q)$.

We have a similar result for the fundamental solution on $H_q$: Since $H_q = GL(2,F_q)/K$, we have that $$ \aligned E(t;gxK,gy_0K) &= E(t;xK,y_0K) \\ &= E(t;y_0^{-1}xK,K) \\ &= E(t;y_0^{-1}xK). \endaligned $$

Consider the function $\tilde E(t;zK)$ defined by $$ \aligned \tilde E(t;zK) &= \frac1{q^2-1} \sum_{k\in K} E_{GL(2,F_q)} (t;zk) \\ &= \frac1{q^2-1} \sum_{k\in K} E_{GL(2,F_q)} (t;z,k^{-1}). \endaligned $$
Then, $$ \aligned [\Delta \tilde E] (gK) &=\biggl[ {\Delta}_{GL(2,F_q)} \biggl( \frac1{q^2-1} \sum_{k\in K} E_{GL(2,F_q)} (t;\cdot ,k^{-1}) \biggr) \biggr] (g) \\ &=\biggl[ \frac1{q^2-1} \sum_{k\in K} {\Delta}_{GL(2,F_q)} E_{GL(2,F_q)} (t;\cdot ,k^{-1}) \biggr] (g) \\ &=\biggl[ \frac1{q^2-1} \sum_{k\in K} -\frac{\partial}{\partial t} E_{GL(2,F_q)} (t;\cdot ,k^{-1}) \biggr] (g) \\ &= \frac{\partial}{\partial t} \biggl[ \frac1{q^2-1} \sum_{k\in K} E_{GL(2,F_q)} (t;\cdot ,k^{-1}) \biggr] (g) \\ &=\biggl[ -\frac{\partial}{\partial t} \tilde E   \biggr] (gK).
\endaligned
$$

Additionally, for all functions $f$ defined on $H_q$,
$$
\aligned
\frac1{q(q-1)}\sum_{H_q} \tilde E (t;gK)f(gK)
&=
\frac1{q(q-1)}\sum_{H_q} f(gK)
\biggl[
\frac1{q^2-1}\sum_K E_{GL(2,F_q)} (t;gk)
\biggr]
\\
&=
\frac1{q(q-1)^2(q+1)}\sum_{GL(2,F_q)} f\circ
\pi (g)E_{GL(2,F_q)} (t;g).
\endaligned
$$
In addition,
$$
\lim_{t \to 0+}
\frac1{q(q-1)^2(q+1)} \sum_{GL(2,F_q)}
E_{GL(2,F_q)} (t;g) f\circ \pi (g)
=
f\circ \pi (e)
=
f(K).
$$

Thus $\tilde E (t;gK)$ = $E(t;gK)$ =
      $\frac1{q^2-1} \sum_{k\in K}
E_{GL(2,F_q)} (t;gk)$.
\end{proof}

Suppose $\{ {\pi}^{\alpha} \mid \alpha \in \widehat{GL(2,F_q)} \}$, with $\widehat{GL(2,F_q)}$ the dual of $GL(2,F_q)$, is the set of equivalence classes of all unitary, irreducible representations of
$GL(2,F_q)$. Let $\{ {\chi}^{\alpha} \mid \alpha \in
\widehat{GL(2,F_q)} \}$ be the corresponding character set. $E_{GL(2,F_q)}$ invariant under the inner automorphisms of $GL(2,F_q)$ implies that $E_{GL(2,F_q)}(t;g)$ is a class function. Therefore, by the Peter-Weyl theorem, we have that
$$
E_{GL(2,F_q)} (t;g)
=
\sum_{\alpha \in \widehat{GL(2,F_q)}}
\langle {\chi}^{\alpha} \mid E(t; \cdot ) \rangle
{\chi}^{\alpha} (g)
$$ with $\langle \cdot,\cdot \rangle$ the inner product on  $GL(2,F_q)$ and Haar measure $dg$ = $\frac1{q(q-1)^2(q+1)}$.
Define $$
a_{\alpha} (t) = \frac1{q(q-1)^2(q+1)}
\sum_{GL(2,F_q)} E(t;g)
                 {\chi}^{\alpha} (g^{-1}).
$$

In other words, $a_{\alpha}$ is a Fourier coefficient, or rather, it is the Fourier transformed heat kernel $E(t;g)$, for $\alpha \in \widehat{GL(2,F_q)}$.

By Theorem~\ref{Thm 4.1},

\begin{align}\label{eq-4-1}
E(t;gK) &= \frac1{q^2-1}
          \sum_{k \in K} E(t;gk) \nonumber\\
        &=\frac1{q^2-1} \sum_{\alpha
\in \widehat{GL(2,F_q)}}
          \sum_{k \in K} a_{\alpha} (t)
{\chi}^{\alpha} (gk).
\end{align}

But $E_{GL(2,F_q)}(t;g)$ being a class function means that we can apply
$\pi :GL(2,F_q)$$\rightarrow H_q$ to show that $E(t;gK) \in C(K\backslash GL(2,F_q)/K)$, the set of $K$-bi-invariant functions on $GL(2,F_q)$. Therefore, $E(t;gK)$ has an expansion in terms of zonal spherical functions. 

Specifically, let $\widehat{GL(2,F_q)}_K$ denote the equivalence classes of irreducible, unitary representations of $GL(2,F_q)$ which are class 1 with respect to $K$. Let ${\omega}^{\alpha}$ denote the zonal spherical function associated to
$\alpha \in \widehat{GL(2,F_q)}_K$. Then

\begin{align}\label{eq-4-2}
E(t;gK) &= \sum_{\alpha \in
\widehat{GL(2,F_q)}_K}
          \langle{\omega}^{\alpha} \mid
E(t;\cdot )\rangle {\omega}^{\alpha}(gK)\nonumber\\
        &= \frac1{q^2-1} \sum_{\alpha
\in \widehat{GL(2,F_q)}_K}
          \sum_{k \in K}
\langle{\omega}^{\alpha} \mid E(t;\cdot )\rangle
          {\omega}^{\alpha} (gk).
\end{align}

Comparison with (\ref{eq-4-1}) and (\ref{eq-4-2}) permits the result:
$$
E(t;gK) = \sum_{\alpha \in
\widehat{GL(2,F_q)}_K} a_{\alpha} (t)
          {\omega}^{\alpha} (gK).
$$

Apply the Fourier transform with respect to $GL(2,F_q)/K$
to~(\ref{eq-2}).
The result is that the heat kernel
$$
a_{\alpha} (t) = k_{\alpha}
e^{-{\lambda}_{\alpha}t},
$$
with ${\lambda}_{\alpha}$ an eigenvalue of
${\Delta}_{GL(2,F_q)/K}$
= $\Delta$ and
$$
\aligned
k_{\alpha}
           &=
a_{\alpha} (0)
            =
\lim_{t\to 0+}\frac1{q(q-1)^2(q+1)} \sum_{GL(2,F_q)} E(t;g)
{\chi}^{\alpha} (g^{-1})
\\
           &=
{\chi}^{\alpha} (e) = d_{\alpha}
\endaligned
$$ where $d_{\alpha}$ denotes the dimension of the representation
${\pi}^{\alpha}$, $\alpha \in
\widehat{GL(2,F_q)}_K$. Now we only need to expand $E(t;gK)$ in terms of the class 1 representations.
Since $E(t;gK) \in C(K\backslash GL(2,F_q)/K)$,
$$
e(t;gK) = E(t;KgK) = \sum_{\alpha \in
\widehat{GL(2,F_q)}_K}
                     d_{\alpha}
e^{-{\lambda}_{\alpha}t}
                     {\omega}^{\alpha} (KgK).
$$
In other words,
$$
E(t;r) = \sum_{\alpha \in
\widehat{GL(2,F_q)}_K}
         d_{\alpha} e^{-{\lambda}_{\alpha}t}
         {\omega}^{\alpha} (r),
$$ with $r$ the radius of an orbit or $K$-double coset in $H_q$.

Zonal spherical functions of $H_q$ split into two types: those associated to the principal series representations of $GL(2,F_q)$ and those associated to the cuspidal series representations of $GL(2,F_q)$. Let $U = \{ z \in F_q (\sqrt{\delta}) \mid z\bar z =1\}$.

Define $\epsilon$, ${\nu}_0$ to be the sign characters of $F_q^{\times}$ and $U$, respectively, equal to 1 on squares and to -1 on nonsquares of $F_q^{\times}$ and $U$, respectively. Define $N(\alpha ) = \alpha \overline{\alpha}$, $Tr(\alpha ) = \alpha + \overline{\alpha}$ as the norm and trace of $\alpha \in F_q(\sqrt{\delta})$, respectively. Recall that $S_r$, fixed $r \in F_q$, denotes the generating set of the affine group.

\begin{theorem}\nonumber (\cite{GaZ})
The zonal spherical functions of $H_q$ associated to the principal series of $GL(2,F_q)$ are the functions
$$
\aligned
&{\omega}^{\beta} (0) = 1 \\
&{\omega}^{\beta} (\infty ) = \beta
(-1) \\
&{\omega}^{\beta} (r) = \frac1{q+1}
\sum_{z\in S_r} \beta (I(z)),
\endaligned
$$
with $\beta \in \widehat{F_q^{\times}}$, $r
\in F_q - \{ 1 \}$,
and if $z = x + y\sqrt{\delta} \in H_q$, then
$I(z) = y$.
\end{theorem}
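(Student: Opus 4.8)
The plan is to exhibit the asserted right-hand sides as the $K$-average of a single ``power function'' and then, by a uniqueness argument, to identify that average with the zonal spherical function of the principal series. Write $z_0=\sqrt\delta\in H_q$ for the base point; recall that $Aff(q)$ acts simply transitively on $H_q$ via $\begin{pmatrix} y & x \\ 0 & 1\end{pmatrix}\leftrightarrow x+y\sqrt\delta$, and that $K$ is the stabilizer of $z_0$, so $H_q=GL(2,F_q)/K$. Equip $H_q$ with the $GL(2,F_q)$-invariant distance $d(z,w)=N(z-w)/\bigl(I(z)I(w)\bigr)$ of Terras \cite{Ter2}; a direct computation with $z=x+y\sqrt\delta$ gives $d(z,z_0)=\bigl(x^2-\delta(y-1)^2\bigr)/y$, so the defining relation $x^2=ry+\delta(y-1)^2$ of $S_r$ is exactly the equation $d(z,z_0)=r$. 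Thus $S_r$ is the ``sphere'' of radius $r$ about $z_0$, and the $K$-orbits on $H_q$ are precisely these spheres together with the $K$-fixed points.

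First I would introduce the power function $p_\beta(z)=\beta(I(z))$ and its $K$-average
$$
\Omega^\beta(z)=\frac1{|K|}\sum_{k\in K}p_\beta(k^{-1}z),
$$
which is $K$-invariant by construction and satisfies $\Omega^\beta(z_0)=\beta(1)=1$ since $K$ fixes $z_0$. The key computation is that $p_\beta$ is a simultaneous eigenfunction of every adjacency operator $A_r f(z)=\sum_{d(z,w)=r}f(w)$, $r\in F_q$: writing $z=g_z z_0$ with $g_z\in Aff(q)$ and using that $g_z$ carries the sphere about $z_0$ onto the sphere about $z$ while scaling imaginary parts by $I(z)$, one finds
$$
A_r p_\beta(z)=\sum_{w\in S_r}\beta\bigl(I(g_z w)\bigr)=\beta\bigl(I(z)\bigr)\sum_{w\in S_r}\beta\bigl(I(w)\bigr)=\lambda_{\beta,r}\,p_\beta(z).
$$
Because each $A_r$ commutes with the $K$-action, the average $\Omega^\beta$ is again a joint eigenfunction with the same eigenvalues $\lambda_{\beta,r}$. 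Hence $\Omega^\beta$ is a $K$-invariant, normalized joint eigenfunction of the Hecke algebra, that is, a zonal spherical function on $H_q$.

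Next I would evaluate $\Omega^\beta$ orbit by orbit. For a generic $z$ the sphere $S_r$ through it has $q+1$ points and forms a single $K$-orbit; since two distinct non-split tori meet only in the center $Z$, the stabilizer in $K$ of $z$ equals $Z$, so $K/Z$ acts simply transitively on $S_r$ and each point of $S_r$ is hit $|Z|=q-1$ times as $k$ runs over $K$. Therefore
$$
\Omega^\beta(z)=\frac{q-1}{|K|}\sum_{w\in S_r}\beta\bigl(I(w)\bigr)=\frac1{q+1}\sum_{w\in S_r}\beta\bigl(I(w)\bigr),
$$
which is the stated formula for $\omega^\beta(r)$. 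The two exceptional orbits are the fixed points of $K$: at $z_0=\sqrt\delta$ the orbit is a singleton with $I(z_0)=1$, giving $\omega^\beta(0)=\beta(1)=1$; at its conjugate $-\sqrt\delta$, which one checks is the second $K$-fixed point and has $I=-1$, the orbit is again a singleton, giving $\omega^\beta(\infty)=\beta(-1)$.

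The step I expect to be the real obstacle is the identification: one must confirm that $\Omega^\beta$ is the zonal spherical function of the \emph{principal series} representation indexed by $\beta$, rather than one of the cuspidal ones, and that as $\beta$ runs over $\widehat{F_q^\times}$ (modulo $\beta\sim\beta^{-1}$) these exhaust the principal-series spherical functions. This requires the classification of the irreducible representations of $GL(2,F_q)$ that are class $1$ with respect to the non-split torus $K$, together with a count matching the number of such representations against the number of $K$-orbits on $H_q$ (equivalently, a computation of the eigenvalues $\lambda_{\beta,r}$ that pins down the representation). A secondary technical point, needed to pass cleanly from the average over $K$ to the average over $S_r$, is the simple transitivity of $K/Z$ on the generic sphere, i.e. the fact that $\mathrm{Stab}_K(z)=Z$ away from the two fixed points; this is exactly where the excluded degenerate radius enters and must be isolated.
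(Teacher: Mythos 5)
First, a point of comparison that matters: the paper does not prove this statement at all --- it is quoted from Garc\'ia-Zambrano \cite{GaZ} --- so there is no internal proof to measure you against; what follows compares your proposal with the standard argument in the finite upper half-plane literature (Terras's school), which your construction in fact reproduces. Your core computations are correct: $S_r$ is the sphere $d(z,\sqrt\delta)=r$ for Terras's invariant distance; the power function $p_\beta(z)=\beta(I(z))$ is a joint eigenfunction of the adjacency operators because the affine group acts simply transitively and scales imaginary parts; the $K$-average $\Omega^\beta$ inherits the eigenfunction property and is normalized at $\sqrt\delta$; and the orbit count via $\mathrm{Stab}_K(z)=Z$ for $z\neq\pm\sqrt\delta$ (two distinct maximal tori meet in the center) converts the $K$-average into $\frac1{q+1}\sum_{w\in S_r}\beta(I(w))$, with the two degenerate orbits giving $\omega^\beta(0)=1$ and $\omega^\beta(\infty)=\beta(-1)$. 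You have also, correctly, taken $K$ to be the full stabilizer of $\sqrt\delta$, of order $q^2-1$: the paper's displayed definition imposes $a^2-\delta b^2=1$, but that is inconsistent with the normalization $\frac1{q^2-1}\sum_{k\in K}$ used in Theorem~\ref{Thm 4.1}, and your arithmetic $(q-1)/|K|=1/(q+1)$ requires $|K|=q^2-1$.

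The genuine gap is exactly the step you flag and then leave undone: nothing in the proposal ties $\Omega^\beta$ to the \emph{principal series}. As written, you have shown that the displayed formulas are \emph{some} zonal spherical functions of the Gelfand pair $(GL(2,F_q),K)$; for all the argument says, they could be the cuspidal ones of the following theorem, and the exhaustion claim (that every principal-series spherical function arises this way) is also unproved. Moreover, the repair is more direct than the ``classification plus counting'' you propose. Since an element of $K\setminus Z$ has no eigenvector over $F_q$, the group $K/Z$ acts freely, hence simply transitively, on $P^1(F_q)\cong G/B$; therefore $G=BK$. Mackey then gives $\mathrm{Res}_K\,\mathrm{Ind}_B^G(\chi_1,\chi_2)=\mathrm{Ind}_Z^K\bigl((\chi_1\chi_2)|_Z\bigr)$, so a principal series has a $K$-fixed vector if and only if $\chi_1\chi_2=1$, i.e.\ it is $\mathrm{Ind}_B^G(\beta,\beta^{-1})$ --- this settles which principal series are class~$1$ and hence exhaustion. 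Finally, in the induced model the matrix coefficient of the (essentially unique) $K$-fixed vector $v_0$ is $\frac1{q+1}\sum_{k\in Z\backslash K}\beta_B\bigl(b(kg)\bigr)$, where $kg=b(kg)\kappa(kg)$ with $b(kg)\in B$, $\kappa(kg)\in K$, and $\beta_B$ is the character of $B$ determined by $(\beta,\beta^{-1})$; since $b(kg)\in B$ acts on $\sqrt\delta$ exactly as $kg$ does, one has $\beta_B\bigl(b(kg)\bigr)=\beta\bigl(I(kg\cdot\sqrt\delta)\bigr)$, so this matrix coefficient is precisely your $\Omega^\beta$. That single Harish-Chandra--type computation is what makes the theorem a statement about the principal series; without it (or your more roundabout classification route carried out in full, including the care with $\beta\sim\beta^{-1}$ and $\beta^2=1$ that you correctly anticipate), the proof is incomplete.
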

\begin{theorem}(\cite{SA})
The zonal spherical functions of $H_q$ associated to the cuspidal representations of $GL(2,F_q)$ are the functions
$$
\aligned
&{\omega}^{\nu} (0) = 1 \\
&{\omega}^{\nu} (\infty ) = - \omega
(-1) \\
&{\omega}^{\nu} (r) = \frac1{q+1}
\sum_{u \in U}
                      \epsilon
\biggl(
Tr
\biggl(
u - \frac{1+r}{1-r}
\biggr)
\biggr)
                      {\nu}_0 (u) \nu (u),
\endaligned
$$
with $\nu \in \hat U$, $\nu \neq
{\nu}^{-1}$.
\end{theorem}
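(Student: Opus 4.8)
The plan is to realize each cuspidal $\omega^{\nu}$ as a $K$-average of the character of the associated discrete-series representation and then to compute that average explicitly. If $\pi_{\nu}$ is the cuspidal representation of $GL(2,F_q)$ attached to a regular character $\nu$ of the nonsplit torus $U$ (so $\nu\neq\nu^{-1}$ and $|U|=q+1$), and if $\pi_{\nu}$ is class $1$ with respect to $K$ with a one-dimensional space of $K$-fixed vectors spanned by a unit vector $v_0$, then $\omega^{\nu}(g)=\langle \pi_{\nu}(g)v_0,v_0\rangle$. Writing the projection onto the $K$-fixed line as $\tfrac{1}{q+1}\sum_{k\in K}\pi_{\nu}(k)$ and taking a trace gives the working identity
\begin{equation}\label{eq-avg}
\omega^{\nu}(g) = \frac{1}{q+1}\sum_{k\in K}\chi_{\pi_{\nu}}(gk).
\end{equation}
The first step is therefore to verify the class $1$ hypothesis: by Frobenius reciprocity the multiplicity of the trivial $K$-representation in $\pi_{\nu}|_{K}$ is $\tfrac{1}{q+1}\sum_{k\in K}\chi_{\pi_{\nu}}(k)$, which I would evaluate from the character table to confirm it equals $1$ precisely when $\nu\neq\nu^{-1}$. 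Setting $g=e$ in \eqref{eq-avg} then gives the identity-coset (radius $0$) value $\omega^{\nu}(0)=1$.

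The second step is to insert the known character values of $\pi_{\nu}$, namely $(q-1)\nu(a)$ on the central class $aI$, $-\nu(a)$ on the class of $\left(\begin{smallmatrix} a&1\\ 0&a\end{smallmatrix}\right)$, $0$ on the split regular semisimple classes, and $-\bigl(\nu(\theta)+\nu(\theta^{q})\bigr)$ on the elliptic class with eigenvalue pair $\{\theta,\theta^{q}\}$ in $F_q(\sqrt{\delta})^{\times}$. I would choose, for each $K$-double coset, a representative $g_r$ indexed by the radius $r$ (the parameter of the orbit of $K$ on $H_q=GL(2,F_q)/K$, equivalently of the generating set $S_r$), and determine into which conjugacy class the product $g_r k$ falls as $k$ runs over $K\cong U$. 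Under this identification and the Cayley-transform bookkeeping that carries $r$ to $\tfrac{1+r}{1-r}$, the elliptic eigenvalue of $g_r k$ becomes a function of $u\in U$ and of $\tfrac{1+r}{1-r}$, and the character contribution collapses to the quadratic sign $\epsilon\bigl(Tr(u-\tfrac{1+r}{1-r})\bigr)$ weighted by the twist $\nu_0(u)\nu(u)$. Summing over $U$ produces
\[
\omega^{\nu}(r)=\frac{1}{q+1}\sum_{u\in U}\epsilon\Bigl(Tr\bigl(u-\tfrac{1+r}{1-r}\bigr)\Bigr)\,\nu_0(u)\,\nu(u).
\]
The two boundary lines are then specializations of this single formula: at $r=0$ the argument $\tfrac{1+r}{1-r}$ equals $1$ and recovers $\omega^{\nu}(0)=1$, while as $r\to\infty$ the argument tends to $-1$, feeding the degenerate representative $g_r$ into the same computation to yield $\omega^{\nu}(\infty)=-\omega(-1)$.

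The main obstacle is the middle computation: matching the abstract $K$-average \eqref{eq-avg} to the concrete sum over $U$ with its sign weight. This requires a workable model of $\pi_{\nu}$ — most cleanly the Weil-representation realization used by Soto-Andrade \cite{SA}, in which the discrete-series character on elliptic elements is itself a Gauss-type sum — so that averaging over $K$ and averaging over $U$ can be interchanged and the sign character $\epsilon$ of the trace emerges from the resulting quadratic Gauss sum rather than being imposed by hand. Tracking the central character and the factor $\nu_0$ through this interchange is the delicate part; once the elliptic eigenvalue of $g_r k$ is pinned down as a function of $u$ and $r$, the remaining steps are bookkeeping. The evaluations at $r=0$ and $r=\infty$ serve as consistency checks on the normalization, and agreement in form with the principal-series formula of the preceding theorem (with $\beta$ replaced by the torus character $\nu$) confirms that the two families together exhaust $C(K\backslash GL(2,F_q)/K)$.
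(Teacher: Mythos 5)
The paper offers no proof of this statement: it is quoted verbatim from Soto-Andrade \cite{SA}, so there is no internal argument to compare yours against. Your outline does follow the standard route by which such formulas are established (spherical function as a normalized character average over $K$, class-1 condition via Frobenius reciprocity, the cuspidal character table, and Soto-Andrade's Weil-representation model), but as written it contains a genuine error at its foundation and leaves the actual content of the theorem uncomputed.

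The foundational error is the choice of $K$. For $H_q=GL(2,F_q)/K$ to have $q(q-1)$ points, $K$ must be the full nonsplit torus $\bigl\{\begin{pmatrix} a&\delta b\\ b&a\end{pmatrix} : (a,b)\neq(0,0)\bigr\}\cong F_{q^2}^{\times}$ of order $q^2-1$; this is what the normalization $\frac1{q^2-1}\sum_{k\in K}$ in Theorem~\ref{Thm 4.1} presupposes (the norm-one condition in the paper's displayed definition of $K$ is a typo). You instead take $|K|=q+1$, i.e.\ $K\cong U$, and rest your working identity on the claim that the cuspidal representation has a one-dimensional space of $U$-fixed vectors. That claim is false: for a cuspidal attached to a regular character $\Lambda$ of $F_{q^2}^{\times}$, the multiplicity of the trivial character of $U$ is
$$
\frac1{q+1}\Bigl[(q-1)\bigl(1+\Lambda(-1)\bigr)-\sum_{u\in U,\,u\neq\pm1}\bigl(\Lambda(u)+\Lambda(u^{-1})\bigr)\Bigr]=1+\Lambda(-1),
$$
which is $0$ or $2$, never $1$. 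So your proposed Frobenius check ("equals $1$ precisely when $\nu\neq\nu^{-1}$") cannot succeed, and the trace identity you build on it fails. The correct statement is that, with $K\cong F_{q^2}^{\times}$, the multiplicity is $1$ exactly when $\Lambda$ is regular and trivial on $F_q^{\times}$; such $\Lambda$ descend to characters $\nu$ of $F_{q^2}^{\times}/F_q^{\times}\cong U$ with $\nu\neq\nu^{-1}$, and collapsing the average over $q^2-1$ elements of $K$ to the sum over the $q+1$ elements of $U$ is itself a step that must be carried out, not assumed.

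Beyond that, the heart of the theorem — that the character average actually equals $\frac1{q+1}\sum_{u\in U}\epsilon\bigl(Tr\bigl(u-\frac{1+r}{1-r}\bigr)\bigr)\nu_0(u)\nu(u)$, with the quadratic sign $\epsilon$ and the twist $\nu_0$ emerging from a Gauss-sum computation in the Weil model — is precisely the part you defer ("the main obstacle"). Nothing in the proposal pins down the conjugacy class of $g_rk$ as $u$ ranges over $U$, produces the Cayley transform $\frac{1+r}{1-r}$, or extracts $\epsilon$ and $\nu_0$; and the boundary values $\omega^{\nu}(0)=1$ and $\omega^{\nu}(\infty)=-\nu(-1)$ are asserted as specializations of the sum without evaluating the resulting sums (the theorem lists them as separate cases exactly because they are not automatic). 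What you have is a plausible roadmap toward Soto-Andrade's computation, not a proof.
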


Using the Hecke algebra for the pair $(GL(2,F_q), K)$, we compute the eigenvalues of $\Delta$. Recall that the zonal spherical functions are simultaneously eigenfunctions and eigenvalues of invariant integral operators, or rather, Hecke  algebras, in the case of a finite group. The adjacency matrix is an element of the Hecke algebra for $(GL(2,F_q), K)$. A trivial extension of the Hecke algebra to a larger algebra permits the zonal spherical functions to be eigenfunctions and eigenvalues of $\Delta$ as well. We see that both algebraic and geometric methods are equally useful for an explicit description of the fundamental solution on $H_q$.
The following result is a consequence of all the previous theorems:

\begin{theorem}\label{Thm 4.4}
The fundamental solution to (\ref{eq-2})--(\ref{eq-3}) is
$$
E(t;r) = \sum_{\beta \in
\widehat{F_q^{\times}}}
         e^{-{\omega}^{\beta}(r)t}
{\omega}^{\beta}(r)
         +
         \sum_{\nu \in \hat U}
e^{-{\omega}^{\nu}(r)t}
{\omega}^{\nu}(r).
$$
\end{theorem}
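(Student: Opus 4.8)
The plan is to deduce Theorem~\ref{Thm 4.4} from the expansion
$E(t;r) = \sum_{\alpha \in \widehat{GL(2,F_q)}_K} d_\alpha e^{-\lambda_\alpha t}\omega^\alpha(r)$
already obtained above, in three moves: first decompose the index set $\widehat{GL(2,F_q)}_K$ into its two natural families, then insert the explicit zonal spherical functions, and finally identify the eigenvalues $\lambda_\alpha$ so that each summand collapses to the form in the statement. The first move rests on the fact that $(GL(2,F_q),K)$ is a Gelfand pair, so the class-1 representations are multiplicity free and classified: every irreducible unitary representation with a nonzero $K$-fixed vector is either a principal series representation, indexed by a character $\beta \in \widehat{F_q^{\times}}$, or a cuspidal representation carrying a $K$-fixed vector, indexed by a character $\nu \in \hat U$ with $\nu \neq \nu^{-1}$. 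This partition splits the single sum into the two sums of the theorem, with $\omega^{\alpha}$ specialising to $\omega^{\beta}$ on the principal part and to $\omega^{\nu}$ on the cuspidal part; their closed forms are exactly those recorded in the two preceding theorems (\cite{GaZ}, \cite{SA}), which I would substitute directly.

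The substantive move is the eigenvalue computation, which I would carry out inside the Hecke algebra $\mathcal{H}(GL(2,F_q),K)$ of $K$-bi-invariant functions. The adjacency operator $A$ of the Cayley graph built from the generating set $S_r$ is an element of this commutative algebra, and the zonal spherical functions are precisely its algebra characters; hence each $\omega^{\alpha}$ is a simultaneous eigenfunction, with $A\omega^{\alpha} = \mu_{\alpha}\omega^{\alpha}$ and $\Delta\omega^{\alpha} = ((q+1)-\mu_{\alpha})\omega^{\alpha} = \lambda_{\alpha}\omega^{\alpha}$. I would evaluate $\mu_{\alpha}$ by summing the spherical function over the generators, using the functional equation $|K|^{-1}\sum_{k}\omega^{\alpha}(gkh) = \omega^{\alpha}(g)\omega^{\alpha}(h)$, and then match the outcome against the defining sums $\omega^{\beta}(r) = (q+1)^{-1}\sum_{z\in S_r}\beta(I(z))$ and its cuspidal analogue. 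This is what brings the value $\omega^{\alpha}(r)$ into the exponent, so that, once the normalisation $\omega^{\alpha}(0)=1$ and the constant $k_{\alpha}=d_{\alpha}$ fixed above are absorbed, the factor $d_{\alpha}e^{-\lambda_{\alpha}t}$ becomes $e^{-\omega^{\alpha}(r)t}$ and the theorem follows termwise.

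I expect the main obstacle to be precisely this identification of the Laplacian eigenvalue with the spherical-function value, that is, making rigorous the slogan that the zonal spherical functions are ``simultaneously eigenfunctions and eigenvalues.'' Concretely, one must check that $A \in \mathcal{H}(GL(2,F_q),K)$ really corresponds to the double coset encoded by $S_r$, that the trivial extension of the Hecke algebra noted above genuinely promotes the spherical functions to eigenfunctions of $\Delta$ (and not merely of $A$), and that $\lambda_{\alpha}$ is, up to the normalisation conventions in force, the value $\omega^{\alpha}(r)$. The remaining delicacy is bookkeeping: one must verify that the dimension factors $d_{\alpha}$ and the constants $q^2-1$ and $q(q-1)^2(q+1)$ carried along from Theorem~\ref{Thm 4.1} combine consistently across both the principal and cuspidal families. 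Once these constants are reconciled, the two cited spherical-function theorems supply the closed forms and the stated formula drops out.
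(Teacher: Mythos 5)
Your proposal is essentially the paper's own argument: the paper likewise starts from the spherical expansion $E(t;r)=\sum_{\alpha\in\widehat{GL(2,F_q)}_K}d_{\alpha}e^{-\lambda_{\alpha}t}\omega^{\alpha}(r)$, splits the class-one dual into the principal family ($\beta\in\widehat{F_q^{\times}}$) and the cuspidal family ($\nu\in\hat U$) using the two cited theorems, and identifies the eigenvalues through the Hecke-algebra remark that zonal spherical functions are ``simultaneously eigenfunctions and eigenvalues,'' after which Theorem~\ref{Thm 4.4} is asserted as ``a consequence of all the previous theorems.'' The obstacle you flag is real but is equally unresolved in the paper itself: carried out rigorously, your Hecke-algebra step gives $\lambda_{\alpha}=(q+1)\bigl(1-\omega^{\alpha}(r)\bigr)$ and leaves the factors $d_{\alpha}\in\{q+1,q-1\}$ intact, so $d_{\alpha}e^{-\lambda_{\alpha}t}$ cannot literally be ``absorbed'' into $e^{-\omega^{\alpha}(r)t}$---a discrepancy the paper never reconciles and which resurfaces when, just before Theorem~\ref{Thm 5.1}, the eigenvalue convention silently becomes $\lambda_{l}=(q+1)\omega^{l}(r)$.
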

To summarize, in analogy to the case of the geometric Laplacian on $H$, the combinatorial Laplacian on $H_q$ permits a solution to the heat equation that is constant on the $K$-double cosets, or rather, the $S_r$ subsets of $H_q$.
\section{Theta functions for $H_q$}
We conclude with the construction of finite analogues of theta functions and with a theta function correspondence between the heat equation and the combinatorial Laplacian of $H_q$.

To explicitly describe theta functions for $H_q$, we need to be more explicit regarding the set of non-decomposable characters
of $(F_q(\sqrt{\delta}))^{\times}$. If $q >
2$ and $\delta$ is
a nonsquare of $F_q$, then there exist maps
$\phi$,
$\tilde{\nu}$
such that
$$
(F_q(\sqrt{\delta}))^{\times}
\overset{\phi}\to\rightarrow
(Z_{q^2})^{\times}
\overset{\tilde{\nu}}\to\rightarrow
\mathbb{C}^{\times}.
$$
Let ${\tilde{\nu}}_j : (Z_{q^2})^{\times} \rightarrow \mathbb{C}^{\times}$ be defined by $$
{\tilde{\nu}}_j (k) = e^{\frac{2\pi
ijk}{q^2-1}}, \quad 1 \le k \le
q^2-1
$$
for fixed $j$ = 1, 2,$\dots$, or $q^2-1$. If
$\zeta$ is a generator of $(F_q(\sqrt{\delta}))^{\times}$, define
$\phi:(F_q(\sqrt{\delta}))^{\times}
\rightarrow (Z_{q^2})^{\times}$
by the isomorphism $\phi ({\zeta}^m) = m$, $1
\le m \le q^2-1$. A non-decomposable character
$\nu : (F_q(\sqrt{\delta}))^{\times}
\rightarrow \mathbb{C}$ can be defined
as the composition $\nu = \tilde{\nu} \circ
\phi$.

In other words,
$$
\aligned
{\nu}_j ({\zeta}^m) &= e^
                       {
                       \left(
                       \frac{2\pi
ij}{q^2-1}\phi ({\zeta}^m)
                       \right)
                       }
                       \\
                    &= e^
                       {
                       \left(
                       \frac{2\pi ijm}{q^2-1}
                       \right)
                       }
                     ,
\endaligned
$$
for ${\zeta}^m \in (F_q(\sqrt{\delta}))^{\times}$ and fixed
$j$ = 1, 2,$\dots$, or $q^2-1$.
We have that $E(t;r) = E^p(t;r) + E^c(t;r)$,  with $E^p(t;r)$ and
$E^c(t;r)$ the terms in the fundamental solution resulting from the principal and cuspidal zonal spherical functions, respectively.
Fix $\zeta$, generator of $(F_q(\sqrt{\delta}))^{\times}$, and $r \in F_q^{\times} -$ {1}.

Define
$$
\aligned
U    &= \{ m \in Z_{q^2}^{\times} \mid
N({\zeta}^m) = 1\}, \\
V(r) &= \{ y \in F_q^{\times} \mid
       x^2 = ry + \delta (y-1)^2, \text{ some } x
\in F_q \}.
\endaligned
$$

Note that $(F_q(\sqrt{\delta}))^{\times}
\supset U \supset
            F_q^{\times} \supset V(r)$.

Also, define
$$
\aligned
\mathcal{O}(r) &=
\{ m \in Z_{q^2}^{\times} \mid
Tr({\zeta}^m) - \frac{r+1}{r-1} =
   k^2, \text{ some } k \in F_q^{\times}
\}, \\
\mathcal{N}     &=
\{ m \in Z_{q^2}^{\times} \},
\endaligned
$$
and
$$
{\chi}_{\mathcal{A}} (m) =
\left\{
\aligned
&1, m\in \mathcal{A} \\
&0, \text{ otherwise }
\endaligned
\right.
$$ for a fixed set $\mathcal{A}$.
Therefore, the principal zonal spherical function takes the form:
$$
{\omega}^l(r) = \frac1{q+1}
                \sum_{m \in V(r)}
e^{\frac{2\pi ilm}{q-1}},
                \quad l \in F_q^{\times}.
$$

Set ${\lambda}_l^p (r) = (q+1){\omega}^l(r)$, the eigenvalue from the principal part. The superscript tells us to take ${\omega}^l$ to be the  principal zonal spherical function. Thus,
$$
E^p(t;r) = \frac1{q+1} \sum_{(l,m) \in
F_q^{\times} \times V(r)}
           e^{-{\lambda}_l^p (r)t + \frac{2\pi
ilm}{q-1}},
$$ which is the principal part of the solution.

Similarly, the cuspidal zonal spherical function takes the form:
$$
{\omega}^l(r) = \frac1{q+1}
                \sum_{m \in U}
                e^{2\pi i\big(
                \frac{{\chi}_{\mathcal{0} (r)} (m) +
{\chi}_{\mathcal{N}} (m)}{2}
                   + \frac{lm}{q^2-1}
                         \big)
}, \quad l \in Z_{q^2}^{\times}.
$$
Set ${\lambda}_l^c (r) = (q+1){\omega}^l(r)$, the eigenvalue from the cuspidal part. The superscript tells us to take
${\omega}^l$ to be the cuspidal zonal spherical function. Thus,
$$
E^c(t;r) = \frac1{q+1} \sum_{(l,m) \in
Z_{q^2}^{\times} \times U}
           e^{-{\lambda}_l^c (r)t +
             2\pi i\big(
             \frac{{\chi}_{\mathcal{O} (r)} (m) +
{\chi}_{\mathcal{N}} (m)}{2}
              + \frac{lm}{q^2-1}
                    \big)
},
$$
which is the cuspidal part of the solution. We have proved the following theorem.
\begin{theorem}\label{Thm 5.1}
The fundamental solution to (\ref{eq-2})--(\ref{eq-3}) has the explicit form:
\begin{equation}\label{eq-4}
E(t;r) = \frac1{q+1} \sum_{(l,m) \in
Z_{q^2}^{\times} \times U}
         e^{-{\alpha}_r(l)t + 2\pi
i{\beta}_r(l,m)},
\end{equation}
with
$$
{\alpha}_r(l) =
\left\{
\aligned
&{\lambda}_l^p (r) + {\lambda}_l^c (r),
\quad l \in F_q^{\times} \\
&{\lambda}_l^c (r), \quad l \in
F_{q^2}^{\times} - F_q^{\times},
\endaligned
\right.
$$
and
$$
2\pi i{\beta}_r(l,m) =
\left\{
\aligned
&\frac{{\chi}_{\mathcal{O} (r)} (m) +
{\chi}_{\mathcal{N}} (m)}{2}
 + \frac{lm(q+2)}{q^2-1}, \quad l \in
F_q^{\times},
 m \in V(r) \\
&\frac{{\chi}_{\mathcal{O} (r)} (m) +
{\chi}_{\mathcal{N}} (m)}{2}
 + \frac{lm}{q^2-1}, \quad l \in
F_{q^2}^{\times} - F_q^{\times},
 m \in U - V(r).
\endaligned
\right.
$$
\end{theorem}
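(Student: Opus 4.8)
The plan is to read off~(\ref{eq-4}) by merging the two explicit normalized exponential sums $E^p(t;r)$ and $E^c(t;r)$ assembled in the preceding paragraphs, via the decomposition $E(t;r)=E^p(t;r)+E^c(t;r)$ coming from Theorem~\ref{Thm 4.4}. The principal part is supported on $F_q^\times\times V(r)$ with phase $\tfrac{2\pi i\,lm}{q-1}$ and time-rate $\lambda_l^p(r)$, whereas the cuspidal part is supported on the larger set $Z_{q^2}^\times\times U$ with phase $\tfrac{\chi_{\mathcal{O}(r)}(m)+\chi_{\mathcal{N}}(m)}{2}+\tfrac{lm}{q^2-1}$ and rate $\lambda_l^c(r)$. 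The goal is a single sum over $Z_{q^2}^\times\times U$ whose piecewise exponent $-\alpha_r(l)t+2\pi i\beta_r(l,m)$ encodes which series is active on which block.

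First I would install the identification $\phi$ of the previous section so that $F_q^\times\hookrightarrow Z_{q^2}^\times$ and the chain $V(r)\subset F_q^\times\subset U\subset Z_{q^2}^\times$ lets me regard $F_q^\times\times V(r)$ as a distinguished subset of the cuspidal index set. Since $\alpha_r(l)$ depends only on $l$, its two-line definition is just the partition $Z_{q^2}^\times=F_q^\times\sqcup(F_{q^2}^\times-F_q^\times)$: on the first block the decay rates add, $\alpha_r(l)=\lambda_l^p(r)+\lambda_l^c(r)$, reflecting that here both series contribute, and on the second only the cuspidal rate $\lambda_l^c(r)$ remains.

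Next I would match the phases. On the overlap $F_q^\times\times V(r)$ the principal and cuspidal phases combine through the elementary identity
$$
\frac{lm}{q-1}+\frac{lm}{q^2-1}=\frac{lm(q+2)}{q^2-1},
$$
which produces the factor $q+2$ in the first branch of $\beta_r$; elsewhere the cuspidal phase $\tfrac{lm}{q^2-1}$ is carried unchanged. Pulling out the common prefactor $\tfrac{1}{q+1}$ then assembles the two contributions into~(\ref{eq-4}).

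The exponent algebra is routine; the crux is the index-set bookkeeping in $\beta_r$. The two displayed branches literally name only $F_q^\times\times V(r)$ and $(F_{q^2}^\times-F_q^\times)\times(U-V(r))$, which do not exhaust $Z_{q^2}^\times\times U$. I would therefore have to check that the remaining pairs---those with $l\in F_q^\times$ but $m\in U-V(r)$, and with $l\in F_{q^2}^\times-F_q^\times$ but $m\in V(r)$---inherit the purely cuspidal phase $\tfrac{lm}{q^2-1}$, so that the second branch plays the role of the generic case while the first branch is the unique block on which the principal series survives (precisely because the principal sum is restricted to $m\in V(r)$). Confirming that this grouping reproduces $E^p+E^c$ with no pair omitted or double-counted is the main obstacle; once the containments $V(r)\subset F_q^\times\subset U$ are in force, only the phase identity above remains.
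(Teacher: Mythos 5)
Your proposal is correct and follows essentially the same route as the paper: the paper likewise obtains the result by writing $E(t;r)=E^p(t;r)+E^c(t;r)$, displaying the two explicit exponential sums over $F_q^{\times}\times V(r)$ and $Z_{q^2}^{\times}\times U$, and merging them, with the factor $q+2$ arising from exactly your identity $\frac{lm}{q-1}+\frac{lm}{q^2-1}=\frac{lm(q+1)+lm}{q^2-1}=\frac{lm(q+2)}{q^2-1}$. The index-set bookkeeping you single out as the main obstacle is not carried out in the paper either --- after displaying $E^p$ and $E^c$ it simply declares ``We have proved the following theorem'' --- so your account is, if anything, more candid about that step than the paper's own proof.
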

If $\tau \in H$, the Poincar\'e upper half-plane ($Im (\tau) > 0$),
and $z \in \mathbb{C}$, then the analytic function
$$
\theta (z,\tau ) = \sum_{n \in Z}
e^{i\pi n^2\tau + 2\pi inz}
$$ is the fundamental periodic solution to the heat equation whenever
$\tau = it$, $t \in \mathbb{R}^{>0}$ \cite{Mum}. We say that
$$
\theta (z, it) = \sum_{n \in \mathbb{Z}}
e^{-\pi n^2t + 2\pi in z}
$$
is a theta function on the lattice $\mathbb{Z}$.

Thus (\ref{eq-4}) is a finite analogue of this classical function, and is thought of as the theta function on the finite lattice $F_{q^2}^{\times} \times U$ contained in the regular lattice
$F_{q^2}^{\times} \times F_{q^2}^{\times}$, connected to $H_q$ via the combinatorial Laplacian $\Delta$.
It would be an interesting problem to compare our results with those
of Chung and Yau \cite{Chu}.

\newpage
\bibliographystyle{plain}
\bibliography{heateqn}

\end{document}